\newenvironment{proof}{\paragraph{Proof:}}{\hfill$\square$}
\newtheorem{theorem}{Theorem}[section]
\newtheorem{lemma}[theorem]{Lemma}
\newtheorem{e-proposition}[theorem]{Proposition}
\newtheorem{corollary}[theorem]{Corollary}
\newtheorem{e-definition}[theorem]{Definition\rm}
\newtheorem{theoreme}{Th\'eor\`eme}[section]
\newtheorem{proposition}[theoreme]{Proposition}
\newcommand{\er}{\operatorname{er}}
\def\og{\leavevmode\raise.3ex\hbox{$\scriptscriptstyle\langle\!\langle$~}}
\def\fg{\leavevmode\raise.3ex\hbox{~$\!\scriptscriptstyle\,\rangle\!\rangle$}}
\journal{Arxiv}
\begin{document}
\centerline{}
\begin{frontmatter}


\selectlanguage{english}
\title{On cogrowth function of uniformly recurrent sequences}


\selectlanguage{english}
\author[authorlabel1]{Igor Melnikov}
\ead{melnikov\_ig@mail.ru}
\author[authorlabel2]{Ivan Mitrofanov}
\ead{phortim@yandex.ru}

\address[authorlabel1]{Moscow Institute of Physics and Technology, Dolgoprudny, Russia}
\address[authorlabel2]{C.N.R.S., \'{E}cole Normale Superieur, PSL Research University, France}


\begin{abstract}
\selectlanguage{english}
For a sequence $W$ we count the number $O_W(n)$ of minimal forbidden words no longer then $n$ and prove that
 $$\overline{\lim_{n \to \infty}} \frac{O_W(n)}{\log_3n} \geq 1.$$
 \footnote{The paper was supported  by Russian Science Foundation (grant no. 17-11-01377). The second named author also received funding from the European Research Council (ERC) under
 	the European Union's Horizon 2020 
 	research and innovation program (grant agreement
 	No.725773).}

\end{abstract}
\end{frontmatter}


\selectlanguage{english}

\section{Introduction}

A language (or a subshift) can be defined by the list of {\it forbidden subwords}.
The linearly equivalence class of the counting function for minimal forbidden words is an topological invariant of the corresponding symbolic dynamical system \cite{Beal}.

G.~Chelnokov, P.~Lavrov and I.~Bogdanov \cite{BogdCheln}, \cite{Cheln1}, \cite{Lavr1}, \cite{Lavr2} estimated the minimum number of forbidden words that define a periodic sequence with a given length of period.

We investigate a similar question for uniformly recurrent sequences and prove a logarithmic estimation for {\it the cogrowth function}.

\section{Preliminaries}
An {\it alphabet} $A$ is a finite set of elements,
{\it letters} are the elements of an alphabet. 
The finite sequence of letters of $A$ is called a {\it finite word} (or {\it a word}).
An {\it infinite word}, or {\it sequence} is a map $\mathbb{N} \to A$. 

The {\it length} of a finite word $u$ is the number $|u|$ of letters in it.
The {\it concatenation} of two words $u_1$ and $u_2$ is denoted by
$u_1u_2$.

A word $v$ is a {\it subword} of a word $u$ if $u = v_1vv_2 $ for some words $ v_1 $, $ v_2 $. 
If $ v_1 $ or $ v_2 $ is an empty word, then $ v $ is 
{\it prefix} or {\it suffix} of $u$ respectively.

A sequence $W$ on a finite alphabet is called {\it periodic} if it has form $W=u^{\infty}$ for some finite word $u$.

A sequence of letters $W$ on a finite alphabet is called 
{\it uniformly recurrent} if for any finite subword $u$ of $W$ there exists a number $C(u, W)$ such that any subword of $W$ with length $C(u, W)$ contains $u$.

A finite word $u$ is called an {\it obstruction} for $W$ if it is not a subword of $W$ but any its proper subword is a subword of $W$.
The {\it cogrowth function} $O_W(n)$ is the number of   obstructions with length $\leqslant n$. 

Further we assume that the alphabet $A$ is binary, $A =\{\alpha, \beta\}$. 

The main result of this article is the following

\begin{theorem}\label{thm:ur}
	Let $W$ be an uniformly recurrent non-periodic sequence on a binary alphabet. 
	Then $$\overline{\lim_{n \to \infty}} \frac{O_W(n)}{\log_3n} \geq 1.$$
	
\end{theorem}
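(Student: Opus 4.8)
\medskip
\noindent\textbf{Proof strategy.}\quad
The plan is to reduce Theorem~\ref{thm:ur} to the assertion
\begin{equation}\label{eq:step}
O_W(3n) > O_W(n)\qquad\mbox{for all sufficiently large }n,
\end{equation}
and then to prove (\ref{eq:step}). Granting (\ref{eq:step}), suppose the theorem were false, so that $O_W(n)\le c\log_3 n$ for some $c<1$ and all $n\ge N_0$. Since $W$ is non-periodic it uses both letters, hence no letter is forbidden and $O_W(1)=0$; so for every large $j$ the $j$ intervals $(3^i,3^{i+1}]$ ($0\le i<j$), which partition $(1,3^j]$, contain altogether at most $O_W(3^j)\le cj$ lengths of obstructions, and therefore at least $(1-c)j$ of them contain none. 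This yields arbitrarily large $i$ with $O_W(3\cdot 3^i)=O_W(3^i)$, contradicting (\ref{eq:step}). Thus it suffices to prove (\ref{eq:step}).

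The first step is a description of obstructions via bispecial factors. Call a factor $B$ of $W$ \emph{right-special} (resp.\ \emph{left-special}) if $B\alpha,B\beta$ (resp.\ $\alpha B,\beta B$) are both factors of $W$, and \emph{bispecial} if it is both (the empty word is bispecial, since both letters occur); for bispecial $B$ set $m(B)=\#\{(x,y)\in\{\alpha,\beta\}^2:\ xBy\ \mbox{is a factor}\}\in\{2,3,4\}$ and call $B$ \emph{defective} if $m(B)\le 3$. I claim that a word $w$ of length $m\ge 2$ is an obstruction of $W$ if and only if, writing $a,b$ for its first and last letters and $B$ for the word obtained by deleting them, $B$ is a defective bispecial factor with $aB,Bb\in\mathcal L(W)$ and $aBb=w\notin\mathcal L(W)$. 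One direction is immediate, since the only proper subwords of $w$ that must be factors are the prefix $aB$ and the suffix $Bb$. For the other, set $P=aB$. Because $Pb=w\notin\mathcal L(W)$ while $P$, being a factor of the uniformly recurrent $W$, is followed in $W$ by some letter, $P$ is followed only by $\overline b$, so $P$ is not right-special. If $B$ were not right-special, let $Q$ be the longest right-special suffix of $P$ (it exists, the empty word being right-special) and pick a suffix $U$ of $P$ with $|Q|<|U|\le m-2$: then $U$ is not right-special, while $Ub$ is a proper subword of $w$ and hence a factor, so $U$ is followed only by $b$ and $U\overline b\notin\mathcal L(W)$, whence $P\overline b\notin\mathcal L(W)$. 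Together with $Pb=w\notin\mathcal L(W)$ this shows $P$ has no right extension, contradicting uniform recurrence. Hence $B$ is right-special; the mirror argument gives left-special; and $aBb=w\notin\mathcal L(W)$ forces $m(B)\le 3$. Consequently $O_W(n)=\sum_B\bigl(4-m(B)\bigr)$, summed over all bispecial factors $B$ with $|B|\le n-2$, so that (\ref{eq:step}) is equivalent to: \emph{for all large $n$ there is a defective bispecial factor of $W$ of length in $(n-2,\,3n-2]$.}

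Since an infinite minimal subshift is never of finite type, $W$ has infinitely many obstructions, i.e.\ defective bispecial factors of unbounded length; write $\ell_1<\ell_2<\cdots$ for their distinct lengths. It remains to show $\ell_{i+1}\le 3\ell_i+O(1)$ for all large $i$, and this is the main obstacle. I would analyse how a defective bispecial factor $B$ with $|B|=\ell$ propagates: relabelling so that $\alpha B\beta$ is a missing corner, the words $\beta B$ and $B\alpha$ are again special, and as the length grows the suffix tree of right-special factors branches only at lengths of bispecial factors, so the next defective bispecial factor longer than $B$ should appear once $W$ has completed a single first-return loop of $B$ in its Rauzy graph. Bounding the length of that loop by $2\ell$ — the point at which non-periodicity (so that $W$ does not follow a single cycle forever) and uniform recurrence must be used — would place a defective bispecial factor at length at most $\ell+2\ell=3\ell$, giving (\ref{eq:step}). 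Making this estimate precise, and in particular getting the constant exactly $3$ (which is sharp), is the heart of the argument; for sharpness one also expects an explicit construction of a uniformly recurrent non-periodic $W$ with $O_W(n)\sim\log_3 n$.
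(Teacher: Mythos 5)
There is a genuine gap, and it sits exactly where you say it does. Your reduction of the theorem to the claim $O_W(3n)>O_W(n)$ for all large $n$, and your characterization of obstructions as the missing corners $xBy$ of defective bispecial factors $B$ (so that $O_W(n)=\sum_{|B|\le n-2}(4-m(B))$), are both correct. But the whole content of the theorem is then concentrated in the assertion that consecutive obstruction lengths satisfy $\ell_{i+1}\le 3\ell_i$ eventually, and for this you offer only a heuristic: that the next defective bispecial factor after $B$ appears within one first-return loop of $B$, and that this loop has length at most $2|B|$. That last bound is false for general uniformly recurrent words: a uniform bound of the form ``return time of a factor of length $\ell$ is $O(\ell)$'' is precisely linear recurrence, which uniform recurrence does not imply. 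Worse, the target statement itself is strictly stronger than the theorem: $O_W(3n)>O_W(n)$ for all large $n$ gives $\underline{\lim}\,O_W(n)/\log_3 n\ge 1$, whereas the paper only claims (and only proves) the $\overline{\lim}$ bound. So you have reduced the problem to an unproved claim that may well be false in general.

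The paper's route around this is instructive. It never bounds the gap between \emph{consecutive} obstruction lengths by a multiplicative constant. Instead it introduces the entropy regulator $\er(R_n(W))$ (the longest fork-free path in the Rauzy graph of order $n$) and proves $\er(R_{n-1}(W))\le 2^{O_W(n)}$ --- a bound that is exponential in the number of obstructions seen so far, not linear in $n$. The key combinatorial lemma is that deleting one edge from $f^{3L}(H)$, for $H$ strongly connected with $\er(H)=L$, still leaves a strongly connected subgraph with entropy regulator at most $3L$; iterating this shows that if the lengths of the $n_0$-th and $(n_0+k)$-th obstructions ever differed by more than $4\cdot 2^{n_0}\cdot 3^k$ for all $k$, one could truncate the obstructions to proper subwords and obtain an infinite factorial language avoiding a genuine factor of $W$, contradicting uniform recurrence. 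This controls gaps only relative to the accumulated quantity $2^{n_0}\cdot 3^k$, which is exactly enough for the $\overline{\lim}$ statement and no more. To salvage your approach you would either have to prove your interval claim (which would be a stronger theorem and needs a new idea, since the return-loop bound fails), or weaken it to a gap condition of the paper's type --- at which point you are reproving the paper's argument.
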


Note that if $F = \alpha \beta \alpha\alpha \beta\alpha \beta\alpha\alpha \beta \alpha \dots$ is the {\it Fibonacci sequence}, then 

$O_F(n) \sim \log_{\varphi}n$, where $\varphi = {\frac{\sqrt5 + 1}2}$ \cite{Beal}.

\section{Factor languages and Rauzy graphs}

A {\it factor language} $\mathcal{U} $ is a set of finite words such that for any $u\in \mathcal{U}$ all subwords of $u$ also belong to $\mathcal{U}$. 
A finite word $u$ is called an {\it obstruction} for $\mathcal{U}$ if $u\not \in \mathcal{U}$, but any its proper subword belongs to $\mathcal{U}$.

For example, the set of all finite subwords of a given 
sequence $W$ forms a factor language denoted by $\mathcal{L}(W)$.

Let $\mathcal{U}$ be a factor language and $k$ be an integer. 
The  {\it Rauzy graph}
$R_k(\mathcal{U})$ of order $k$ is the directed graph with the vertex set $\mathcal{U}_k$ and the edge set $\mathcal{U}_{k+1}$.

Two vertices $u_1$ and $u_2$ of $R_k(\mathcal{U})$ are connected by an edge $u_3$ if and only if $u_1, u_2, u_3 \in \mathcal{U}$, $u_1$ is a prefix of $u_3$, and $u_2$ is a suffix of $u_3$.

For a sequence $W$ we denote the language $R_k(\mathcal{L}(W))$ by $R_k(W)$.

Further the word {\it graph} will always mean a directed graph, the word {\it path} will always mean a {\it directed path} in a directed graph.
The {\it length} $|p|$ of a path $p$ is the number of its vertices, i.e. the number of edges plus one.

If a path $p_2$ starts at the end of a path $p_1$, we denote their concatenation by $ p_1p_2 $.
It is clear that $|p_1p_2| = |p_1| + |p_2| - 1$.

Recall that a directed graph is {\it strongly connected}  if it contains a directed path from $v_1$ to $v_2$ and a directed path from $v_2$ to $v_1$ for every pair of vertices $\{v_1,v_2\}$.

\begin{proposition}
	Let $W$ be an uniformly recurrent non-periodic sequence. 
	Then for any $k$ the graph $R_k(W)$ is strongly connected and is not a cycle.
\end{proposition}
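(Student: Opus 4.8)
The plan is to prove the two assertions separately, both by exploiting the defining properties of a uniformly recurrent sequence. For strong connectedness: fix $k$ and take any two vertices $u_1, u_2 \in \mathcal{L}(W)_k$. Since $u_2$ is a subword of $W$, it occurs at some position; by uniform recurrence applied to the word $u_1$ (take $C = C(u_1,W)$), every window of length $C$ in $W$ contains an occurrence of $u_1$, in particular the window starting just after the chosen occurrence of $u_2$ does. Hence there is a finite subword of $W$ of the form $u_1 \cdots u_2$ — more precisely, a subword $v$ of $W$ having $u_1$ as a prefix and $u_2$ as a suffix — and reading the length-$(k+1)$ windows of $v$ from left to right gives a directed path in $R_k(W)$ from $u_1$ to $u_2$. (One should note the degenerate case where the occurrences overlap, which is handled by taking the window a little further along; uniform recurrence guarantees a fresh occurrence of $u_1$ within bounded distance regardless.) This gives a path $u_1 \to u_2$, and symmetrically a path $u_2 \to u_1$, so $R_k(W)$ is strongly connected.

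For the claim that $R_k(W)$ is not a cycle, the key point is that a strongly connected graph in which every vertex has out-degree and in-degree exactly $1$ is a single cycle, and conversely if $R_k(W)$ were a cycle then following the unique edges around it would force $W$ to be eventually periodic, hence (being uniformly recurrent) periodic — contradicting the hypothesis. So it suffices to show that some vertex of $R_k(W)$ has out-degree $\geq 2$, i.e. that there is a word $u \in \mathcal{L}(W)_k$ with both $u\alpha$ and $u\beta$ in $\mathcal{L}(W)_{k+1}$ (a \emph{right-special} factor). Suppose not: every length-$k$ factor has a unique right extension within the language. Then starting from any length-$k$ factor occurring in $W$, its continuation in $W$ is completely determined, so $W$ is eventually periodic, and again uniform recurrence upgrades this to genuine periodicity, a contradiction.

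The main obstacle, such as it is, lies in making the last implication fully rigorous: from "every factor of length $k$ has a unique one-letter right extension in $\mathcal{L}(W)$" one wants to conclude periodicity of $W$. The clean way is to argue that the map sending a length-$k$ factor to the length-$k$ factor obtained by shifting one letter right is then a well-defined function on the finite set $\mathcal{L}(W)_k$, and the orbit of the prefix of $W$ of length $k$ under this function describes $W$ itself; a function on a finite set, iterated from a point, is eventually periodic, so $W$ is eventually periodic, whence periodic by uniform recurrence (any uniformly recurrent eventually periodic sequence is periodic, since the eventual period word must recur from the start). The rest is bookkeeping about Rauzy graphs: translating "path in $R_k$" to "subword of $W$" and back, and checking the overlap degeneracy in the strong-connectedness argument. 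I would present strong connectedness first, then the right-special vertex, then the degree/cycle dichotomy.
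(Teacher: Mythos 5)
Your proof is correct and takes essentially the same route as the paper: uniform recurrence produces a subword of $W$ containing one of $u_1,u_2$ followed by the other, whose length-$(k+1)$ windows give the required directed path, and a cycle would force every vertex to have out-degree one, making $W$ eventually periodic and hence (by uniform recurrence) periodic --- you merely spell out the details the paper leaves as ``it is clear''. The one small slip is that your window argument actually yields a word of the form $u_2\cdots u_1$ rather than $u_1\cdots u_2$, but this is immaterial since both directions are needed and the construction is symmetric.
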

\begin{proof}
	Let $u_1$, $u_2$ be two elements of $\mathcal{L}(W)_k$.
	Since $W$ is uniformly recurrent then ? $W$ 
	contains a subword of form $u_1uu_2$. 
	The subwords of $u_1uu_2$ of length $k+1$ form in $R_k(W)$ a path connecting $u_1$ and $u_2$.
	
	Assume that $R_k(W)$ is a cycle of length $n$. 
	Then it is clear that $W$ is periodic and $n$ is the length of its period.  
\end{proof}

If $H$ is a directed graph, its {\it directed line graph} $f(H)$
has one vertex for each edge of $H$. 
Two vertices of $f(H)$ representing directed edges $e_1$ from $v_1$ to $v_2$ and $e_2$ from $v_3$ to $v_4$ in $H$ 
are connected by an edge from $e_1$ to $e_2$ in $f(H)$ 
when $v_2 = v_3$. 
That is, each edge in the line digraph of $H$ represents a length-two directed path in $H$.

Let $\mathcal{U}$ be a factor language. 
A path $p$ of length $m$ in $R_n(\mathcal{U})$ corresponds to a word of length $n + m - 1$.

The graph $R_m (\mathcal{U}) $ can be considered 
as a subgraph of $f^{m-n}(R_n(\mathcal{U}))$.
Moreover, the graph $R_{n+1}(\mathcal{U})$ is obtained from $f(R_{n}(\mathcal{U}))$ by deleting edges that correspond to obstructions of $\mathcal{U}$ of length $n+1$.

We call a vertice $v$ of a directed graph $H$ {\it a fork} if $v$ has out-degree more than one.
Further we assume that all forks have out-degrees exactly 2 (this is the case of a binary alphabet).

For a directed graph $H$ we define its {\it entropy regulator}: $er(H)$ is the minimal integer such that any directed path of length $er(H)$ in $H$ contains at least one vertex that is a fork in $H$. 

Now we prove some facts about  entropy regulators.

\begin{proposition}
	Let $H$ be strongly connected digraph that is not a cycle, then $\er(H) < \infty$.
\end{proposition}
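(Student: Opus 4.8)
The statement is that a strongly connected digraph $H$ that is not a cycle has a finite entropy regulator. The plan is to argue by contradiction: suppose $\er(H) = \infty$. By definition this means that for every integer $N$ there is a directed path of length $N$ in $H$ none of whose vertices is a fork. First I would observe that $H$ is a finite graph (it is the Rauzy graph of a factor language over a finite alphabet, or more generally we are working with finite digraphs throughout), so a directed path of length exceeding the number of vertices must repeat a vertex, and hence $H$ contains a directed cycle $C$ all of whose vertices have out-degree exactly one.

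\smallskip

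\noindent\textbf{Key steps, in order.}
\begin{enumerate}
\item Suppose $\er(H)=\infty$; produce, for $N$ larger than $|V(H)|$, a fork-free directed path of length $N$, and extract from it a directed cycle $C$ whose every vertex has out-degree $1$ in $H$.
\item Let $S$ be the set of vertices reachable from $C$ (equivalently, from any fixed vertex of $C$). Since every vertex of $C$ has a unique out-edge, starting from a vertex of $C$ one is forced to follow the cycle forever; formally, by induction the only vertices reachable from $C$ are the vertices of $C$ itself, so $S = V(C)$.
\item By strong connectivity, every vertex of $H$ is reachable from every vertex of $C$, so $V(H) = S = V(C)$. Thus $H$ has the same vertex set as the cycle $C$ and every vertex has out-degree exactly $1$.
\item A finite digraph in which every vertex has out-degree exactly $1$ and whose vertex set is a single directed cycle has exactly $|V(C)|$ edges, namely the edges of $C$; hence $H = C$ is a cycle, contradicting the hypothesis. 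Therefore $\er(H) < \infty$.
\end{enumerate}

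\smallskip

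The argument is essentially a pigeonhole/reachability argument and I do not expect a serious obstacle; the only point requiring a little care is step 2, making sure that "reachable from $C$" really collapses to $C$ — this uses crucially that the out-degrees along $C$ are exactly $1$ (not merely that $C$ is fork-free, which already gives out-degree $\le 1$, but one must also know these vertices do have an out-edge, which holds because they lie on the cycle $C$). One should also state explicitly at the outset that all graphs here are finite, since the pigeonhole step in (1) depends on it; this is implicit in the paper's setup (Rauzy graphs over a finite alphabet) but worth recording.
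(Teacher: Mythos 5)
Your proof is correct and follows essentially the same route as the paper's: take a fork-free path longer than the number of vertices, extract a cycle of out-degree-one vertices, and use strong connectivity to conclude $H$ equals that cycle. Your write-up just makes explicit the reachability and edge-counting details that the paper leaves implicit.
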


\begin{proof}
	
	Assume the contrary. 
	Let $n$ be the total number of vertices in $H$. 
	Consider a path of length $n + 1$ in $ H $ that does not contain forks. 
	Note that this path visits some vertex $ v $ at least twice.
	This means that starting from $v$ it is possible to obtain only vertices of this cycle.
	Since the graph $H$ is strongly connected, $H$ coincides with this cycle.
\end{proof}

\begin{lemma}\label{le:del_edge}
	Let $H$ be a strongly connected digraph, $\er(H) = L$, 
	let $v$ be a fork in $H$, the edge $e$ starts at $v$.
	
	Let the digraph $H^*$ be obtained from $H$ by removing the edge $e$.
	Let $G$ be a subgraph of $H^*$ that consists of all vertices and edges reachable from $v$.
	Then $G$ is strongly connected digraph.
	Also $G$ is either a cycle of length at most $L$, or $\er(G) \leq 2L $.
\end{lemma}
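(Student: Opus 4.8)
The plan is to prove the three assertions in turn, the one piece of bookkeeping needed throughout being that deleting $e$ lowers the out-degree of $v$ from $2$ to $1$ and changes no other out-degree, while $G$, being the set of vertices and edges reachable from $v$, is closed under taking outgoing edges inside $H^*$. \emph{Strong connectivity of $G$.} Every vertex of $G$ is by definition reachable from $v$ in $H^*$, and a witnessing path cannot leave the reachable set, so it lies in $G$; hence $v$ reaches every vertex of $G$ within $G$. Conversely, given $w\in G$, choose a path from $w$ to $v$ in $H$ (it exists since $H$ is strongly connected) and truncate it at its first occurrence of $v$. This truncated path meets $v$ only at its endpoint, so it uses no edge starting at $v$, in particular avoids $e$, hence is a path in $H^*$; all of its vertices are reachable from $v$, so it is a path in $G$. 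Thus $G$ is strongly connected.

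Next I would observe that the forks of $G$ are exactly the forks of $H$ lying in $G$, with $v$ deleted from this list: the out-degree of a vertex of $G$ computed in $G$ equals its out-degree in $H$ everywhere except at $v$, where it drops to $1$. If every vertex of $G$ has out-degree $1$ then, being strongly connected, $G$ is a cycle, say of length $m$; walking this cycle once starting at the successor of $v$ and stopping at $v$ gives a path in $H$ all of whose vertices but the last are distinct from $v$, hence not forks of $H$, so deleting its last vertex yields a fork-free path of length $m-1$ in $H$. Therefore $m-1<\er(H)=L$, i.e. $m\le L$, which is the required bound in the cyclic case.

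Otherwise $G$ has a fork, so $\er(G)$ is finite by the earlier proposition, and it remains to show $\er(G)\le 2L$. Suppose some path $p$ of length $2L$ in $G$ contains no fork of $G$; then every vertex of $p$ has out-degree $1$ in $G$. Viewing $p$ as a path in $H$, its first $L$ vertices contain a fork of $H$ and so do its last $L$ vertices; such a fork lies on $p$, so it cannot be a fork of $G$, hence it must be $v$, which therefore occurs on $p$ at two distinct positions $i<j$. Since every vertex of $p$ has a unique out-neighbour in $G$, the tail of $p$ from position $i$ is completely determined by its first vertex; as $p_i=p_j=v$, iterating the forced step from $v$ returns to $v$ after $j-i$ steps and then repeats, so the set of vertices reachable from $v$ in $G$ is the finite set $\{p_i,\dots,p_{j-1}\}$, all of out-degree $1$ — which makes $G$ a cycle, contrary to the case hypothesis. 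Hence every length-$2L$ path of $G$ meets a fork of $G$, i.e. $\er(G)\le 2L$. I expect the last step to be the main obstacle: one must be careful that along a fork-free path of $G$ the continuation is genuinely forced (this is exactly what "no fork of $G$" provides), and that the forced orbit of $v$ exhausts everything reachable from $v$, which is the point that collapses $G$ to a cycle and yields the contradiction.
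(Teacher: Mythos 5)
Your proof is correct and follows essentially the same route as the paper's: establish strong connectivity by producing a return path to $v$ that avoids $e$, observe that the forks of $G$ are the forks of $H$ in $G$ other than $v$, and show that a fork-free path of length $2L$ in $G$ must pass through $v$ twice, collapsing $G$ to a cycle of length at most $L$. Your variations (truncating at the first occurrence of $v$ rather than taking a minimal path, and making the cycle/non-cycle case split explicit with the forced-walk argument) are sound and, if anything, spell out details the paper leaves implicit.
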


\begin{proof}
	
	First we prove the digraph $G$ is strongly connected.
	Let $ v '$ be an arbitrary vertex of $G$, then there is a path in $G$ from 
	$ v $ to $ v' $.
	Consider a path $p$ of minimum length from $ v '$ to $ v $ in $H$. Such path exists, otherwise $H$ is not strongly connected.
	The path $p$ does not contain the edge $ e $, otherwise it could be shortened.
	This means that $p$ connects $ v '$ with $ v $ in the digraph $ G $.
	From any vertex of $G$ we can reach the vertex $ v $, hence  $ G $ is strongly connected.
	
	Consider an arbitrary path $ p $ of length $ 2L $ in the digraph $ G $, suppose that $p$ does not have forks.
	Since $ \er (H) = L $, then in $ p $ there are two vertices $ v_1 $ and $ v_2 $ such that they are forks in $H$ and there are no forks in $p$ between $v_1$ and $v_2$. 
	The out-degrees of all vertices except $ v $ coincide in $ H $ and $ G $.
	If $v_1\neq v$ or $ v_2 \neq v $, then we find a vertex of $p$ that is a fork in $ G $.

	If $v_1 = v_2 = v$, then there is a cycle $C$ in $ G $ such that $|C| \leq L$ and $C$ does not contain forks of $G$.
	Since $ G $ is a strongly connected graph, it coincides with this cycle $C$.
	
\end{proof}

\begin{lemma}\label{le:evol}
	Let $H$ be a strongly connected digraph, $\er(H) = L$. 
	Then $\er(f(H)) = L$. 
\end{lemma}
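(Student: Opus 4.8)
The plan is to transfer information about fork-free paths between $H$ and its directed line graph $f(H)$ through the standard correspondence between walks in a digraph and walks in its line digraph, and then to read $\er$ off the length of a longest fork-free path.

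First I would record two elementary observations. A vertex of $f(H)$ is an edge $e=(v_1\to v_2)$ of $H$, and the edges of $f(H)$ leaving $e$ are exactly those ending at an edge of $H$ that starts at $v_2$; hence the out-degree of $e$ in $f(H)$ equals the out-degree of $v_2$ in $H$, so $e$ is a fork of $f(H)$ if and only if its head $v_2$ is a fork of $H$. Secondly, a path $e_1,\dots,e_m$ of length $m$ in $f(H)$ corresponds to a path $w_0\to w_1\to\cdots\to w_m$ of length $m+1$ in $H$, where $e_i=(w_{i-1}\to w_i)$; combining this with the first observation, the path $e_1,\dots,e_m$ is fork-free in $f(H)$ exactly when none of $w_1,\dots,w_m$ is a fork of $H$, the initial vertex $w_0$ imposing no condition.

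Next I would establish the two inequalities. To get $\er(f(H))\le L$, take any path $e_1,\dots,e_L$ of length $L$ in $f(H)$; the vertices $w_1,\dots,w_L$ of the associated path in $H$ themselves form a path of length $L$, so by $\er(H)=L$ one of them, say $w_j$, is a fork of $H$, and then $e_j$ is a fork of $f(H)$ lying on our path. To get $\er(f(H))\ge L$, use $\er(H)=L$ to choose a fork-free path $w_1\to\cdots\to w_{L-1}$ of length $L-1$ in $H$; since $H$ is strongly connected there is an edge $e_1=(w_0\to w_1)$ of $H$ entering $w_1$, and with $e_i=(w_{i-1}\to w_i)$ we obtain a path $e_1,\dots,e_{L-1}$ of length $L-1$ in $f(H)$ whose vertices have heads among $w_1,\dots,w_{L-1}$, hence is fork-free. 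So every path of length $L$ in $f(H)$ contains a fork while some path of length $L-1$ does not, which is exactly the assertion $\er(f(H))=L$.

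The one point requiring care is the index bookkeeping forced by the convention that the length of a path counts its vertices rather than its edges, together with the asymmetry that when passing from a fork-free path of $H$ to one of $f(H)$ we may freely prepend the extra initial vertex $w_0$ but cannot freely append a final one; this asymmetry is precisely what lets the fork-free path survive the transfer without losing length. The degenerate case $L=1$, where every vertex of $H$ --- and therefore every vertex of $f(H)$ --- is a fork, is trivial and would simply be noted separately.
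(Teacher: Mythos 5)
Your proof is correct and follows essentially the same route as the paper: identify the forks of $f(H)$ with the edges of $H$ whose heads are forks, and transfer a path of $L$ vertices in $f(H)$ to a path of $L+1$ vertices in $H$. You are in fact more thorough than the paper, whose proof only establishes the inequality $\er(f(H)) \le L$ (the only direction used later); your lower-bound construction, prepending an incoming edge to a fork-free path of length $L-1$, supplies the half of the stated equality that the paper leaves implicit.
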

\begin{proof}
	
	The forks of the digraph $ f (H) $ are 
	edges in $H$ that end at forks.
	Consider $L$ vertices forming a path in $f(H)$.
	This path corresponds to a path of length $L + 1 $ in $ H $.
	Since $\er(H) \leq L$, there exists an edge of this path that ends at a fork.
\end{proof}

\begin{corollary}\label{cor:er}
	Let $ W $ be a binary uniformly recurrent non-periodic sequence; then for any $n$ 
	$$
	\er(R_{n-1}(W)) \leq 2^{O_W(n)}.
	$$
\end{corollary}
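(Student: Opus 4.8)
The plan is to prove the one‑step estimate $\er(R_{k}(W))\le 2^{t_k}\,\er(R_{k-1}(W))$ for every $k\ge 1$, where $t_k$ is the number of edges deleted in passing from $f(R_{k-1}(W))$ to $R_k(W)$; these edges are exactly the obstructions of length $k+1$, so $t_k=O_W(k+1)-O_W(k)$. Multiplying these inequalities for $k=1,\dots,n-1$ and using $\er(R_0(W))=1$ then gives $\er(R_{n-1}(W))\le 2^{t_1+\dots+t_{n-1}}=2^{O_W(n)-O_W(1)}\le 2^{O_W(n)}$, which is the claim. (Here $\er(R_0(W))=1$ because $R_0(W)$ is a single vertex carrying both loops $\alpha$ and $\beta$ --- both letters occur since $W$ is non‑periodic --- so this vertex is a fork and every one‑vertex path meets it.)

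For the one‑step estimate, put $H_0:=f(R_{k-1}(W))$. By Lemma~\ref{le:evol}, $\er(H_0)=\er(R_{k-1}(W))$, which is finite because $R_{k-1}(W)$ is strongly connected and not a cycle (the propositions above). Recall that $R_k(W)$ is obtained from $H_0$ by deleting the $t_k$ obstruction‑edges; I would delete them one at a time, forming $H_0\supset H_1\supset\dots\supset H_{t_k}=R_k(W)$ (all with the same vertex set), and apply Lemma~\ref{le:del_edge} at each step --- at the fork where the current edge begins --- to obtain $\er(H_{i+1})\le 2\,\er(H_i)$, hence $\er(R_k(W))\le 2^{t_k}\,\er(H_0)$.

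To license this iteration I would check three things, all consequences of the fact that each $H_i$ contains $R_k(W)$ as a spanning subgraph while $R_k(W)$ is strongly connected and not a cycle. First, each deleted edge starts at a vertex that is a fork in $H_0$ and stays a fork in every $H_i$: if such an obstruction is $w=u_1c$ with $|u_1|=k$, then the letter $c$ is forbidden after $u_1$, so $u_1$ has out‑degree at most $1$ in $R_k(W)$; strong connectedness of $R_k(W)$ forces out‑degree exactly $1$ there, whence $u_1$ had out‑degree $2$ in $H_0$ and its unique surviving out‑edge lies in $\mathcal{L}(W)$, so is never deleted. Second, distinct obstructions of length $k+1$ have distinct tails $u_1$ (otherwise $u_1$ would have out‑degree $0$ in $R_k(W)$), so each fork loses exactly one edge. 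Third, since $H_i\supseteq R_k(W)$ on the same vertex set, $H_i$ is strongly connected and not a cycle; therefore the ``cycle'' alternative of Lemma~\ref{le:del_edge} cannot occur at step $i$, and the subgraph of $H_i$ minus the current edge that is reachable from the relevant fork is the whole of that graph, so that indeed $H_{t_k}=R_k(W)$ after all $t_k$ deletions.

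The main obstacle is exactly this bookkeeping around the repeated use of Lemma~\ref{le:del_edge}: keeping every intermediate graph strongly connected so the lemma applies, ensuring that passing to the ``reachable from the fork'' subgraph does not actually shrink the graph (so that $t_k$ deletions really land on $R_k(W)$, not on a proper subgraph), and ruling out the cycle alternative at each step. Each of these follows from the single observation ``$H_i\supseteq R_k(W)$ with the same vertices, and $R_k(W)$ is strongly connected and not a cycle'', but that has to be carried along at every step.
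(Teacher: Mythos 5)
Your proof is correct and follows essentially the same route as the paper's: pass from $R_{k-1}(W)$ to its line graph using Lemma~\ref{le:evol} (which preserves the entropy regulator), then delete the obstruction edges of length $k+1$ one at a time via Lemma~\ref{le:del_edge}, each deletion at most doubling $\er$, and accumulate the factor $2^{O_W(n)}$ (the paper phrases this as induction on $n$ rather than a telescoping product, which is the same thing). Your bookkeeping --- that each deleted edge starts at a distinct fork, that intermediate graphs remain strongly connected and non-cyclic, and that the ``reachable subgraph'' in Lemma~\ref{le:del_edge} does not shrink --- is exactly what the paper asserts more tersely (``the edges $e_1,\dots,e_a$ start at different forks'') and is a welcome filling-in of details.
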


\begin{proof}
	We prove this by induction on $n$.
	The base case $n=0$ is obvious. 
	
	Let $ \ er (R_ {n-1} (W)) = L $ and suppose $ W $ has exactly $ a $ obstructions of length $ n + 1 $.
	These obstructions correspond to paths of length 2 in the graph $R_ {n-1} (W)$, i.e. edges of the graph 
	$ H: = f (R_ {n-1} (W)) $.
	From Lemma \ref{le:evol} we have that $ \ er (H) = L $.
	
	The graph $ R_ {n} (W) $ is obtained from the graph $ H $ by removing some edges $ e_1, e_2, \dots, e_a $.
	Since $ W $ is a uniformly recurrent sequence, the digraphs $ H $ and $ H - \{e_1, e_2, \dots, e_a \} $ are strongly connected.
	This means that the edges $ e_1, \dots, e_a $ start at different forks of $ H $. 
	We also know that $ R_n (W) $ is not a cycle.
	The graph $ R_n (W) $ can be obtained by removing edges $e_i$ from $ H $ one by one. 
	Applying Lemma \ref{le:del_edge} $a$ times, we show that $ \er (R_n (W)) \leq 2^aL $, which completes the proof.
	
\end{proof}

\section{Proof of Theorem \ref{thm:ur}}

\begin{proposition}\label{pr:prol}
	
	Let $H$ be a strongly connected digraph, let $ p $ be a path in $H$, let a fork $v$ be the starting point of the last edge of $p$.
	We call a path $t$ in $H$ \emph{good} is $t$ does not contain $p$ as a sub-path. 
	Then for any good path $ s $ there exists an edge $ e $ such that $ se $ is also a good path.
	Moreover, if the last vertex of $ s $ is a fork $ v '\neq v $, then there are two such edges.
	
\end{proposition}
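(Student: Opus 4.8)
The plan is to fix an arbitrary good path $s$, let $w$ denote its last vertex, and look for the extending edge(s) among the out-edges of $w$. Since $H$ is strongly connected, $w$ has out-degree at least one, and by the binary-fork convention this out-degree is exactly $1$ (if $w$ is not a fork) or exactly $2$ (if $w$ is a fork). Everything rests on one observation, which I would isolate and prove first: if $e$ is an out-edge of $w$ and the path $se$ is \emph{not} good, then $se$ contains $p$ as a sub-path while $s$ does not; but any sub-path of $se$ that avoids the last edge $e$ is already a sub-path of $s$, so the occurrence of $p$ inside $se$ must use $e$, and since $e$ is the final edge of $se$ this forces $p$ to be a suffix of $se$. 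In particular the last edge of $p$ coincides with $e$.

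Next I would split on whether $w = v$, where $v$ is the fork that starts the last edge of $p$. If $w \neq v$, then the last edge of $p$ starts at $v$, hence is not an out-edge of $w$ (an out-edge of $w$ starts at $w$); by the observation, \emph{every} out-edge $e$ of $w$ gives a good path $se$. This yields one admissible edge when $w$ is not a fork and two admissible edges when $w$ is a fork different from $v$, which is precisely the ``moreover'' clause. If instead $w = v$, then $v$ has exactly two out-edges; one of them, call it $e_0$, is the last edge of $p$, and I take $e$ to be the other out-edge. Were $se$ not good, the observation would force the last edge of $p$ to equal $e \neq e_0$, a contradiction; so $se$ is good, which supplies the single admissible edge required in this case (nothing is claimed, nor needed, about $se_0$).

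The one place demanding care is the opening observation, namely the claim that a newly created occurrence of $p$ in $se$ must be a suffix of $se$. I do not expect this to be a real obstacle: it is a purely combinatorial fact about edge-sequences of paths, reflecting that appending a single edge to a good path can only introduce a forbidden copy of $p$ at the very end. Once it is in place, the remainder of the argument is just bookkeeping about out-degrees and about which vertex the last edge of $p$ emanates from.
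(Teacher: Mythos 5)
Your proposal is correct and follows essentially the same route as the paper: split on whether the last vertex of $s$ equals $v$, take any out-edge in the first case and the out-edge of $v$ other than the last edge of $p$ in the second. The only difference is that you explicitly justify the key observation (a forbidden occurrence of $p$ created by appending $e$ must be a suffix of $se$, forcing $e$ to be the last edge of $p$), which the paper leaves implicit.
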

\begin{proof}
	
	If the last vertex of $s$ is not $v$, 
	then we can take any edge outgoing from it.
	If the last vertex of $s$ is $ v $, then 2 edges $ e_1 $ and $ e_2 $ go out of $v$.
	One of them is the last edge of the path $ p $, so we can take another edge.
	
\end{proof}

\begin{lemma}
	Let $H$ be a strongly connected digraph, $\er(H) = L$.
	Let $u$ be an arbitrary edge of the graph $f^{3L}(H)$, 
	then the digraph $f^{3L}(H) - u$ contains a strongly connected subgraph $B$ such that $\er(B)\leq 3L$.
\end{lemma}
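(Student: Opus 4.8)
The plan is to rephrase $f^{3L}(H)-u$ in terms of words of $H$ and then to carve out of it a recurrent, slowly‑branching subgraph, using Lemma~\ref{le:del_edge} (together with Lemma~\ref{le:evol}, which lets us pass freely between $f^{k}(H)$ and $f^{k+1}(H)$) for the generic case and Proposition~\ref{pr:prol} for the degenerate one.

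First I would record the reductions. Iterating Lemma~\ref{le:evol} gives $\er(f^{3L}(H))=L$, so the graph we must shrink already has entropy regulator $L$. An edge $u$ of $f^{3L}(H)$ is the same datum as a walk $Q=(h_0\to h_1\to\cdots\to h_{3L+1})$ in $H$, and a path of $f^{3L}(H)-u$ is exactly a walk of $H$ (of the relevant length) not containing $Q$ as a sub‑walk. Also, if for $k\le 3L$ one deletes from $f^{k}(H)$ the edge represented by some sub‑walk of $Q$, then the $f^{3L-k}$‑image of the result is a subgraph of $f^{3L}(H)-u$, and taking this image changes neither the entropy regulator nor strong connectedness. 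Finally, since $\er(H)=L$ the word $Q$ contains a fork of $H$ among its first $L$ vertices; fix such a fork $v$.

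In the generic situation I would invoke Lemma~\ref{le:del_edge}. Let $h_{3L-s}$ be the last fork of $H$ among $h_{3L-L+1},\dots,h_{3L}$, so $0\le s\le L-1$; the portion $h_{3L-s+1}\to\cdots\to h_{3L}\to h_{3L+1}$ of $Q$ is forced, so any walk of $H$ avoiding the prefix $(h_0\to\cdots\to h_{3L-s+1})$ avoids $Q$. That prefix represents an edge of $f^{3L-s}(H)$ starting at a fork of $f^{3L-s}(H)$ (namely $(h_0,\dots,h_{3L-s})$), so Lemma~\ref{le:del_edge}, applied in $f^{3L-s}(H)$ (which has entropy regulator $L$), produces inside $f^{3L-s}(H)$ with that edge removed a strongly connected subgraph which is either a cycle of length $\le L$ or has entropy regulator $\le 2L$. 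In the latter case its $f^{s}$‑image is the required $B$, with $\er(B)\le 2L\le 3L$.

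The main obstacle is the cyclic alternative. There the subgraph just produced is a cycle of length $\pi\le L$; unwinding it to $H$, the prefix of $Q$ up to $h_{3L-s}$ is a power of a cycle $c$ of $H$ carrying exactly one fork of $H$, which — being a fork of $H$ lying on the initial winding — is the fork $v$ found above, and the removed edge amounts to $v$ staying on $c$. Here I would use that $H$ is not periodic, i.e.\ $H$ properly contains $c$: the on‑$c$ out‑edge $v\to v^{+}$ of $v$ is traversed by $Q$, so deleting $v\to v^{+}$ from $H$ itself again forbids $Q$, and Lemma~\ref{le:del_edge} applied to $H$ at the fork $v$ gives either a subgraph of entropy regulator $\le 2L$ (whose $f^{3L}$‑image is the required $B$) or once more a cycle — in which case $H$ is two small cycles glued at $v$, and one exhibits $B$ directly: the walks that never wind $c$ twice in immediate succession avoid $Q$, form a non‑empty strongly connected graph (via Proposition~\ref{pr:prol}, forbidding a suitable path through $v$), and their forks recur at intervals bounded in terms of $L$, so the $f^{3L}$‑level version has entropy regulator $\le 3L$. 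The delicate point is to run this case analysis so that the accumulated bound stays within $3L$ — which is exactly where both the non‑periodicity of $H$ and the slack between $2L$ and $3L$ are used — and to keep the subgraphs strongly connected throughout, passing where needed to a terminal strongly connected component, which is non‑trivial because Proposition~\ref{pr:prol} forbids sinks.
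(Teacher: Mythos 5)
Your argument is correct and reaches the required bound, but it is organized quite differently from the paper's proof. The paper splits at once according to how many distinct forks of $H$ the walk $p_u$ visits (two distinct forks; one fork repeated but others present in $H$; one fork and no others), in each case exhibits a short forbidden pattern of the form (path ending at a fork)$\cdot$(edge), and runs the good-path extension argument via Proposition~\ref{pr:prol} separately each time --- with patterns $p_1e_2$, $p_1e_1$, $p_1p_2e_1$ in the respective cases. Your opening move --- deleting the prefix of $Q$ up to one step past its last fork, viewed as a single edge of $f^{3L-s}(H)$, and applying Lemma~\ref{le:del_edge} there --- does not appear in the paper and does most of the work: a non-cycle outcome finishes immediately with the better bound $2L$, and a cycle outcome yields for free the structural fact that the prefix of $Q$ is a power of a short one-fork cycle $c$, so that $Q$ traverses both out-edges of the fork $v$ and a second application of Lemma~\ref{le:del_edge}, now in $H$ itself at $v$, either finishes (again with $2L$) or pins $H$ down to two cycles of length $\le L$ glued at $v$. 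Only in that figure-eight configuration do you need the good-path machinery, with the single forbidden pattern $c$ followed by its own first edge; and your remark that one should pass to a \emph{terminal} strongly connected component (so that forks of the good-path graph lying in it remain forks of the component) is more careful than the paper, which takes an unspecified component. Two points to tidy up: the clause ``the removed edge amounts to $v$ staying on $c$'' is backwards as written --- the deleted edge of $f^{3L-s}(H)$ is the one \emph{leaving} $c$ at $v$, which is precisely why the reachable set collapses onto $c$ --- though your next step (deleting the on-$c$ edge $v\to v^{+}$, which $Q$ indeed traverses one period earlier) is the right one, so nothing breaks; and the figure-eight case still needs the explicit verification that any $3L$ consecutive window positions of a good walk contain two visits to $v$ of which at least one is a fork, since two consecutive non-fork visits would force the forbidden $cc$. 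This is routine but is exactly where the constant $3L$ is used up.
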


\begin{proof}
	The edge $u$ of the graph $f^{3L} (H) $ corresponds to the path $ p_u $ in the graph $H$, $|p_u| = 3L + 2 $.
	
	This path visits at least 3 forks (taking into account the number of visits).
	Next, we consider three cases.
	
	{\bf Case 1.} 
	Assume the path $ p_u $ visits at least two different forks  of $ H $.
	Let $ v_1 $, $ v_2 $ be two different forks in $ H $,
	let $ p_ {1} e_2 $ be a sub-path of $ p_u $, where
	the path $ p_{1} $ starts at $ v_1 $ and ends at $ v_2 $ and does not contain forks other than $v_1$ and $v_2$,
	and let the edge $e_2$ go out of $ v_2 $.
	
	It is clear that the length of $ p_1 $ does not exceed $ L + 1 $.
	Lemma \ref{le:del_edge} implies that there is a strongly connected subgraph $G$ oh $H$ such  that $G$ contains the vertex $ v_2 $ but does not contain the edge $e_2$.
	
	If $ G $ is not a cycle, then $ \er (G) \leq 2L $.
	Hence, the graph $ B: = f ^ {2L} (A) $ is a subgraph of $ f ^ {2L} (H) $, and from Lemma \ref{le:evol} we have 
	$\er(B) \leq 2L $. The edges of $ B $ are paths in $ G $ and do not contain $ e_2 $, which means that $ B $ does not contain the edge $ u $.

	If $ G $ is cycle, we denote it by $ p_{2} $ (we assume that $ v_2 $ is the first and last vertex of $ p_ {2} $).
	The length of $ p_2 $ does not exceed $ L $.
	Among the vertices $ p_2 $ there are no forks of $H$ besides $ v_2 $.
	Therefore, $ v_1 \not \in p_2 $.
	
	Call a path $t$ in  $ H $ {\it good}, if $t$
	does not contain the sub-path $ p_1e_2 $.
	
	Let us show that if $ s $ is a good path in $ H $, then there are two different paths $ s_1 $ and $ s_2 $ starting at the end of $ s $ such that $ | s_1 | = | s_2 | = 3L $ and the paths $ ss_1 $, $ ss_2 $ are also good.
	
	Proposition \ref{pr:prol} says that for any good path we can add an edge an obtain a good path.
	
	There is a path $ t_1 $, $ | t_1 | <L $ such that $ st_1 $ is a good path and ends at some fork $v$.
	If $v\neq v_2 $, then two edges $ e_i $, $ e_j $ go out from $v$, the paths $ st_1e_i $ and $ st_2e_j $ are good, and each of them can be prolonged further to a good path of arbitrary length.

	If $v = v_2$, then the paths $st_1p_2p_2$ and $st_1p_2e_2$ are good.
	
	Consider in $f^{3L}(H)$ a subgraph that consists of all vertices and edges that are good paths in $ H $, let $B$ be a strongly connected component of this subgraph.
	
	We proved that $ \er (B) \leq 3L $.
	In addition, it is clear that $ B $ does not contain the edge $ u $.
	
	{\bf Case 2.} Assume that the path $p_u$  visits exactly one fork $v_1$ (at least trice), but there are forks besides $v_1$ in $H$.
	
	There are two edges $ e_1 $ and $ e_2 $ that go out from $ v_1 $.
	Starting with these edges and and moving until forks, 
	we obtain two paths $ p_1 $ and $ p_2 $.
	The edge $ e_1 $ is the first edge of $ p_1 $, the edge $ e_2 $ is the first of $ p_2 $, 
	and $ | p_1 |, | p_2 | \leq L $.
	
	Since $ p_u $ goes through $ v_1 $ more than once and does not contain other forks, one of $p_1, p_2$ is a cycle.
	
	Without loss of generality, the path $p_1$ starts and ends at 
	$ v_1 $.
	If $ p_2 $ also ends at $ v_1 $, 
	then from $ v_1 $ it is impossible to reach any other fork.
	Therefore, $ p_2 $ ends at some fork $ v_2 \neq v_1 $.
	Since $ p_u $ visits  $ v_1 $ at least three times and does not contain other forks, $p_u$ has sub-path $ p_1e_1 $.
	
	We call a path {\it good} if it does not contain $ p_1e_1 $.
	We  show that if $ s $ is a good path in $ H $, then there are two different paths $ s_1 $ and $ s_2 $ starting at the end of $ s $ such that $ | s_1 | = | s_2 | = 3L $ and the paths $ ss_1 $, $ ss_2 $ are also good.
	
	There is a path $ t_1 $ such that $ | t_1 | <L $ and the path $ st_1 $ is a good path ending at some fork $v$.
	If $v = v_1 $, take $ t_2: = t_1p_2 $, otherwise we take $ t_2 = p_2 $.
	We see that $ | t_2 | \leq 2L $, the path $ st_2 $ is good and ends at some  fork in $ v '\neq v_1 $.
	The proposition \ref{pr:prol} shows that the path $st_2$ can be prolonged to the right at least in two ways.

	We complete the proof as the previous case. 
	Consider in $f^{3L}$ a subgraph consisting of all vertices and edges corresponding to good paths in $H$ and take the strongly connected component $ B $ in this subgraph.
	
	{\bf Case 3.} 
	Assume that the path $p_u$  visits exactly one fork $v_1$ (at least trice), and there are no forks in $H$ besides $v_1$.
	
	The edges $ e_1 $ and $ e_2 $ go out from $ v_1 $, 
	the cycles $ p_1 $ and $ p_2 $ start and end at $ v_1 $ and do not contain other forks, $ e_1 $ is the first edge of $ p_1 $, 
	$ e_2 $ is the first edge of $ p_2 $, $ | p_1 |, | p_2 | \leq L $.

	The path $ p_u $ contains a sub-path of the form $p_ip_je_k $, where $ i, j, k \in \{1, 2 \} $.
	
	There are two cases.
	
	{\bf Case 3a} Assume that $i = j$ or $j=k$.
	
	Without loss of generality we assume that $ p_u $ contains a sub-path $ p_1e_1 $.
	
	We call a path {\it good} if it does not contain a sub-path 
	$ p_1e_1 $.
	We  show that if $ s $ is a good path in $ H $, then there are two different paths $ s_1 $ and $ s_2 $ starting at the end of $ s $ such that $ | s_1 | = | s_2 | = 3L $ and the paths $ ss_1 $, $ ss_2 $ are also good.

	First we take a path $t_1$ such that $|t_1|< L$ and $st_1$ is a good path ending at $v_1$.
	The paths $st_1p_2e_2$ and $st_1p_2e_1$ are good.
	
	We complete the proof as in the previous cases.
	
	{\bf Case 3b} Assume that $i \neq j$ and $j\neq k$. 
	Without loss of generality we assume that $ p_u $ contains a sub-path $p_1p_2e_1$.
	
	We call a path in $H$ {\it good} if it does not contain a sub-path 
	$p_1p_2e_1$.
	We  show that if $ s $ is a good path in $ H $, then there are two different paths $ s_1 $ and $ s_2 $ starting at the end of $ s $ such that $ | s_1 | = | s_2 | = 3L $ and the paths $ ss_1 $, $ ss_2 $ are also good.
	
	First we take a path $t_1$ such that $|t_1|< L$ and $st_1$ is a good path ending at $v_1$.
	The paths $st_1p_2p_2e_1$ and $st_1p_2p_2e_2$ are good. 
	Note that
	$|t_1p_2p_2e_1|= |t_1p_2p_2e_2| \leq 3L$.
	
	Again, we complete the proof as in the previous cases.
	
\end{proof}

\begin{corollary}\label{cor:main}
	
	Let $H$ be a strongly connected digraph, $\er(H) = L$, $k \geq 3L$.
	Let $u$ be an arbitrary edge of the graph $f^{k}(H)$, 
	then the digraph $f^{k}(H) - u$ contains a strongly connected subgraph $B$ such that $\er(B)\leq 3L$.
	
\end{corollary}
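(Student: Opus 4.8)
The plan is to deduce the statement from the preceding Lemma, which is precisely the case $k = 3L$, by applying that Lemma not to $H$ itself but to a suitable iterate of the line-graph operation. Concretely, I would set $G := f^{k-3L}(H)$; this makes sense because $k \geq 3L$, and for $k = 3L$ it is just $G = H$.

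First I would check that $G$ is an admissible input for the preceding Lemma, i.e.\ that it is strongly connected and $\er(G) = L$. Strong connectedness is preserved by the line-graph operation: given two edges $e_1, e_2$ of a strongly connected digraph, a directed path from the head of $e_1$ to the tail of $e_2$ becomes, after prepending $e_1$ and appending $e_2$, a directed path from $e_1$ to $e_2$ in the line graph. Hence $G = f^{k-3L}(H)$ is strongly connected, and iterating Lemma~\ref{le:evol} gives $\er(f^m(H)) = \er(H) = L$ for all $m \geq 0$, in particular $\er(G) = L$.

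Next I would invoke the composition law $f^{a}\circ f^{b} = f^{a+b}$ for the line-graph construction, which yields $f^{3L}(G) = f^{3L}\big(f^{k-3L}(H)\big) = f^{k}(H)$. Therefore an arbitrary edge $u$ of $f^{k}(H)$ is an arbitrary edge of $f^{3L}(G)$, and applying the preceding Lemma to the digraph $G$ produces a strongly connected subgraph $B$ of $f^{3L}(G) - u = f^{k}(H) - u$ with $\er(B) \leq 3L$, which is exactly the assertion.

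I do not expect any genuine obstacle: the entire content is the reduction above, and the only facts needing a line of justification are the stability of strong connectedness under $f$ and the composition law $f^{a}\circ f^{b} = f^{a+b}$, both routine. If one prefers not to re-apply the Lemma with a different base digraph, an equivalent direct route is available: the length-$(3L+2)$ prefix of the path in $H$ representing $u$ is itself an edge $q$ of $f^{3L}(H)$; the preceding Lemma provides a strongly connected $B_0 \subseteq f^{3L}(H) - q$ with $\er(B_0) \leq 3L$; and $B := f^{k-3L}(B_0)$ is then a strongly connected subgraph of $f^{k}(H)$ that omits $u$, since the path of $f^{3L}(H)$ representing $u$ begins with the edge $q \notin B_0$, while $\er(B) = \er(B_0) \leq 3L$ again by Lemma~\ref{le:evol}.
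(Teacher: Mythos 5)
Your reduction is correct and is exactly the argument the paper leaves implicit (the corollary is stated without proof): since $f$ preserves strong connectedness and, by Lemma~\ref{le:evol}, preserves the entropy regulator, one applies the preceding lemma to $G=f^{k-3L}(H)$ and uses $f^{3L}(G)=f^{k}(H)$. Both of your routes are sound; nothing is missing.
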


\begin{lemma}\label{le:subseq}
	Let $a_n$ be a sequence of positive numbers such that  
	$$
	\underline{\lim}_{k\to \infty} \frac{\log_3 a_k}{k} > 1.
	$$
	
	Then there exists $n_0$ such that for any $k>0$
	$$
	a_{n_0 + k} - a_{n_0} > 4 \cdot 2^{n_0}\cdot 3^k
	$$ 
\end{lemma}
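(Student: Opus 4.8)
The plan is to argue by contradiction, turning the hypothesis into the fact that $a_k$ must eventually grow geometrically with ratio strictly larger than $3$, and showing that failure of the conclusion would instead force the growth ratio down to $2$. To set this up, write $c:=\underline{\lim}_{k\to\infty}(\log_3 a_k)/k>1$, fix $\delta\in(0,\,c-1)$, and choose $N$ so that $a_k>3^{(1+\delta)k}$ for every $k\ge N$. Call an index $n$ \emph{bad} if there is some $k>0$ with $a_{n+k}-a_n\le 4\cdot 2^n\cdot 3^k$; the lemma asserts precisely that some index $\ge N$ is not bad, so assume for contradiction that every $n\ge N$ is bad and pick for each such $n$ a witness $m(n)>n$ with $a_{m(n)}\le a_n+4\cdot 2^n\cdot 3^{m(n)-n}$.

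The key step is a dichotomy according to which of the two terms on the right-hand side dominates. If $a_n<4\cdot 2^n\cdot 3^{m(n)-n}$, then $a_{m(n)}<8\cdot 2^n\cdot 3^{m(n)-n}$; combining this with $a_{m(n)}>3^{(1+\delta)m(n)}$ and writing $d=m(n)-n\ge 1$ yields $3^{(1+\delta)n}<8\cdot 2^n\cdot 3^{-\delta d}\le 8\cdot 2^n$, i.e.\ $(3^{1+\delta}/2)^n<8$, which is impossible once $n$ exceeds some explicit $N_1$ (here one uses $3^{1+\delta}>3>2$). Hence for every $n\ge N':=\max(N,N_1)$ the other alternative must hold, $a_n\ge 4\cdot 2^n\cdot 3^{m(n)-n}$, and therefore $a_{m(n)}\le a_n+a_n=2a_n$.

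Now iterate $m$ starting from $n_0:=N'$, setting $n_{j+1}:=m(n_j)$. All $n_j\ge N'$, the sequence $(n_j)$ is strictly increasing (so $n_j\ge n_0+j$), and $a_{n_{j+1}}\le 2a_{n_j}$, whence $a_{n_j}\le 2^j a_{n_0}$. On the other hand $a_{n_j}>3^{(1+\delta)n_j}\ge 3^{(1+\delta)(n_0+j)}$, so $(3^{1+\delta}/2)^j<a_{n_0}\cdot 3^{-(1+\delta)n_0}$ for all $j$ — absurd, as the left-hand side tends to infinity. This contradiction exhibits an index $n_0\ge N'$ that is not bad, which is exactly the statement of the lemma.

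The dichotomy is the step I expect to be the crux, and it is also the reason the argument must be indirect: a head-on estimate would bound $a_{n_0+k}$ below and $a_{n_0}$ above using the liminf, but the best available upper bound is $a_{n_0}<3^{(c+\varepsilon)n_0}$ along the extracting subsequence, so for small $k$ the exponent $(1+\delta)(n_0+k)$ of the lower bound is smaller than $(c+\varepsilon)n_0$ and the difference $a_{n_0+k}-a_{n_0}$ is not even seen to be positive; the contradiction route sidesteps this by invoking only the crude bound $a_m>3^{(1+\delta)m}$ and the geometric growth it forces.
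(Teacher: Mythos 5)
Your argument is correct, but it takes a genuinely different route from the paper's. The paper proceeds directly: setting $b_k := a_k/3^k$, the hypothesis forces $b_k \to \infty$, so one can choose $n_0$ with $b_{n_0} > 10$ and $b_n \ge b_{n_0}$ for all $n > n_0$ (a minimizer of $b$ over a sufficiently far tail), after which the conclusion is the one-line estimate $a_{n_0+k}-a_{n_0} = b_{n_0+k}3^{n_0+k}-b_{n_0}3^{n_0} \ge b_{n_0}3^{n_0}(3^k-1) > 4\cdot 2^{n_0}\cdot 3^k$. You instead argue indirectly: assuming every large index is bad, your dichotomy rules out the case where the error term $4\cdot 2^n\cdot 3^{m(n)-n}$ dominates (it would contradict $a_{m(n)}>3^{(1+\delta)m(n)}$ once $n$ is large), and in the surviving case iterating the witness map yields $a_{n_j}\le 2^j a_{n_0}$ against the lower bound $3^{(1+\delta)(n_0+j)}$, which is absurd since $3^{1+\delta}/2>1$. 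Both proofs rest on the single fact that $a_k/3^k\to\infty$; the paper's tail-minimum choice converts the problem into one inequality and is markedly shorter, while your dichotomy-plus-iteration is longer and non-constructive about $n_0$ but makes explicit where each of the constants $4$, $2^{n_0}$, $3^k$ and the gap $\delta$ are actually used. One minor point of phrasing: the lemma asserts only that \emph{some} index is not bad, not necessarily one $\ge N$; your contradiction hypothesis (every $n\ge N$ is bad) is nevertheless the right one to refute, since exhibiting a non-bad index $\ge N$ certainly proves the lemma.
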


\begin{proof}
	Let us denote $a_k/3^k$ by $b_k$.
	It is clear that  $\lim_{k\to \infty}b_k = \infty$.
	Hence, there exists $n_0$ such that  ??? $b_{n_0} > 10$ and $b_{n} \geq b_{n_0}$ for all $n > n_0$.
	Then for any $k > 0$ it holds
	$$
	a_{n_0 + k} - a_{n_0} = b_{n_0+k}3^{n_0 + k} - b_{n_0}3^{n_0} \geq b_{n_0}3^{n_0}(3^k - 1) > 4 \cdot 2^{n_0}\cdot 3^k.
	$$
\end{proof}

Now we are ready to prove Theorem \ref{thm:ur}.

\begin{proof}
	
	Arrange all the obstructions of the uniformly recurrent binary sequence $ W $ by their lengths:
	$$
	|u_1| \leq |u_2| \leq \dots 
	$$
	
	If $\underline{\lim}_{k\to \infty}\frac{\log_3 |u_k|}{k} \leq 1$, then the statement of the theorem holds.
	
	Assume the contrary. Lemma \ref{le:subseq} says that there is $n_0$ such that for any positive integer $k$ it holds 
	$$
	|u_{n_0 + k}| > |u_{n_0}| + 4 \cdot 2^{n_0} \cdot 3^k.
	$$
	
	For $n > n_0$ denote the number $|u_{n_0}| + 4 \cdot 2^{n_0} \cdot 3^{n-n_0}$ by $b_n$. 
	Let $b_n = |u_{n_0}|$.
	
	For all $n > n_0$ take a proper subword $v_n$ of $u_n$ such that $|v_n| = b_n$.
	
	Denote by $ \mathcal {U} $  the set of all finite binary words that do not contain as subwords the words $ u_i $ for $ 1 \leq i \leq n_0 $ and $ v_i $ for $ i> n_0 $ .
	
	We get a contradiction with the uniform recurrence of $ W $ if we show that the language $ \mathcal {U} $ is infinite.
	
	It is clear that the Rauzy graphs $R_{u_{n_0}-1}(\mathcal{U}) = R_{u_{n_0}-1}(W)$, and from Corollary \ref{cor:er} we have
	$$
	\er(R_{u_{n_0}-1}(\mathcal{L})) \leq 2^{n_0}.
	$$
	
	By induction on $ k $, we show that the graph $ R_{b_{n_0 + k} -1} (\mathcal {U}) $ contains a strongly connected subgraph $ H_k $ such that $ \er (H_{k}) \leq 3^k \cdot 2^{n_0} $.
	
	We already have the base case $k=0$. 
	
	The graph $R_{b_{n_0+k+1}-1}(\mathcal{U})$ contains a sub-graph $f^{b_{n_0+k+1}-b_{n_0+k}}(H_k)$ without at most one edge (that corresponds to the word  $v_{n_0+k+1}$).
	Note that
	$$b_{n_0+k+1}-b_{n_0+k} > 3 \cdot \er(H_k).
	$$
	
	hence we can apply Corollary \ref{cor:main}. 
	Then the digraph $R_{b_{n_0+k+1}-1}(\mathcal{U})$ has a strongly connected subgraph with entropy regulator at most $3^{k+1}\cdot 2^{n_0}$.
	
	We show that all the graphs $R_{b_n} (\mathcal {U}) $ are nonempty and, therefore, the language $ \mathcal {U} $ is infinite.
	
	On the other hand, all elements in $ \mathcal {U} $ are subwords of $ W $ and do not contain $v_{n_0 + 1}$.
	But the word $ v_ {n_0 + 1} $ is a proper subword of the obstruction $u_{n_0 + 1}$, and, therefore, $ v_ {n_0 + 1} $ is a subword of $ W $.
	This means that the infinity of the language $ \mathcal{U}$ contradicts the uniform recurrence of $ W $.
	
\end{proof}

\noindent {\bf Example.} Consider the sequence
$$
F = 0100101001001\dots
$$

defined recursively as follows: 
$u_0 = 0$, $u_1 = 01$, $u_{k}$ is the concatenation $u_{k-1}u_{k-2}$ for $k \geq 2$.
Since $u_i$ is a prefix of $u_{i+1}$, the sequence $(u_i)$ has a limit, called a {\it Fibonacci word} 

In Example 25 of \cite{Beal} the set $\{11,000,10101,00100100, \dots\}$ of minimal forbidden words of $F$ is described. These words have lengths equal to Fibonacci numbers. It is well known that the Fibonacci sequence is uniformly recurrent, hence in Theorem \ref{thm:ur} we cannot replace the constant $3$ by a number smaller than ${\frac{\sqrt5 + 1}2}$.

\end{document}